\def\NZQ{\mathbb}               
\def\RR{{\NZQ R}}
\def\frk{\mathfrak}               
\def\Phi{{\frk N}}
\def\eb{{\bold e}}
\def\opn#1#2{\def#1{\operatorname{#2}}} 
\opn\chara{char} \opn\length{\ell} \opn\pd{pd} \opn\rk{rk}
\opn\projdim{proj\,dim} \opn\injdim{inj\,dim} \opn\rank{rank}
\opn\depth{depth} \opn\grade{grade} \opn\height{height}
\opn\embdim{emb\,dim} \opn\codim{codim}
\opn\Tr{Tr} \opn\bigrank{big\,rank}
\opn\superheight{superheight}\opn\lcm{lcm}
\opn\trdeg{tr\,deg}
\opn\reg{reg} \opn\lreg{lreg} \opn\ini{in} \opn\lpd{lpd}
\opn\size{size}\opn{\mult}{mult}
\opn\div{div} \opn\Div{Div} \opn\cl{cl} \opn\Cl{Cl}
\opn\Spec{Spec} \opn\Supp{Supp} \opn\supp{supp} \opn\Sing{Sing}
\opn\Ass{Ass} \opn\Min{Min}
\opn\Ann{Ann} \opn\Rad{Rad} \opn\Soc{Soc}
\opn\Syz{Syz} \opn\Im{Im} \opn\Ker{Ker} \opn\Coker{Coker}
\opn\Am{Am} \opn\Hom{Hom} \opn\Tor{Tor} \opn\Ext{Ext}
\opn\End{End} \opn\Aut{Aut} \opn\id{id} \opn\ini{in}
\opn\nat{nat}
\opn\pff{pf}
\opn\Pf{Pf} \opn\GL{GL} \opn\SL{SL} \opn\mod{mod} \opn\ord{ord}
\opn\Gin{Gin}
\opn\Hilb{Hilb}\opn\adeg{adeg}\opn\std{std}\opn\ip{infpt}
\opn\Pol{Pol}
\opn\sat{sat}
\opn\Var{Var}
\opn\Gen{Gen}
\opn\aff{aff} \opn\con{conv} \opn\relint{relint} \opn\st{st}
\opn\lk{lk} \opn\cn{cn} \opn\core{core} \opn\vol{vol}
\opn\link{link} \opn\star{star}
\opn\gr{gr}
\def\Cc{{\mathcal C}}
\def\Oc{{\mathcal O}}
\def\Jc{{\mathcal J}}
\def\pot#1#2{#1[\kern-0.28ex[#2]\kern-0.28ex]}
\opn\dirlim{\underrightarrow{\lim}}
\opn\inivlim{\underleftarrow{\lim}}
\let\to=\rightarrow
\def\Implies{\ifmmode\Longrightarrow \else
        \unskip${}\Longrightarrow{}$\ignorespaces\fi}
\def\implies{\ifmmode\Rightarrow \else
        \unskip${}\Rightarrow{}$\ignorespaces\fi}
\def\iff{\ifmmode\Longleftrightarrow \else
        \unskip${}\Longleftrightarrow{}$\ignorespaces\fi}
\newtheorem{Theorem}{Theorem}[section]
\newtheorem{Corollary}[Theorem]{Corollary}
\newtheorem{Example}[Theorem]{Example}
\newtheorem{Conjecture}[Theorem]{Conjecture}
\newtheorem{Question}[Theorem]{Question}
\let\epsilon\varepsilon
\let\phi=\varphi
\let\kappa=\varkappa
\def\qed{\ifhmode\textqed\fi
      \ifmmode\ifinner\quad\qedsymbol\else\dispqed\fi\fi}
\def\textqed{\unskip\nobreak\penalty50
       \hskip2em\hbox{}\nobreak\hfil\qedsymbol
       \parfillskip=0pt \finalhyphendemerits=0}
\def\dispqed{\rlap{\qquad\qedsymbol}}
\opn\dis{dis}
\def\pnt{{\raise0.5mm\hbox{\large\bf.}}}
\opn\Lex{Lex}
\begin{document}
\title{Chain polytopes and algebras with straightening laws}
\author{Takayuki Hibi and Nan Li}
\thanks{}
\subjclass{}
\address{Takayuki Hibi,
Department of Pure and Applied Mathematics,
Graduate School of Information Science and Technology,
Osaka University,
Toyonaka, Osaka 560-0043, Japan}
\email{hibi@math.sci.osaka-u.ac.jp}
\address{Nan Li,
Department of Mathematics,
Massachusetts Institute of Technology,
Cambridge, MA 02139, USA}
\email{nan@math.mit.edu}
\thanks{}
\begin{abstract}
It will be shown that the toric ring of the chain polytope of a finite partially ordered set
is an algebra with straightening laws on a finite distributive lattice.
Thus in particular every chain polytope possesses a regular unimodular triangulation
arising from a flag complex.
\end{abstract}
\subjclass{}
\thanks{
{\bf 2010 Mathematics Subject Classification:}
Primary 52B20; Secondary 13P10, 03G10. \\
\hspace{5.3mm}{\bf Key words and phrases:}
algebra with straightening laws, chain polytope, partially ordered set.}
\maketitle
\section*{Introduction}
In \cite{Stanley}, the order polytope $\Oc(P)$ and the chain polytope $\Cc(P)$
of a finite poset (partially ordered set) $P$ are studied in detail from a view point of combinatorics.
Toric rings of order polytopes are studied in \cite{Hibi}.
In particular it is shown that the toric ring $K[\Oc(P)]$ of the order polytope $\Oc(P)$ is
an algebra with straightening laws (\cite[p. 124]{HibiRedBook}) on a finite distributive lattice.
In the present paper, it will be proved that the toric ring $K[\Cc(P)]$ of the chain polytope $\Cc(P)$
is also an algebra with straightening laws on a finite distributive lattice.
It then follows immediately that $\Cc(P)$ possesses a regular unimodular triangulation
arising from a flag complex.

\section{Toric rings of order polytopes and chain polytopes}
Let $P = \{x_{1}, \ldots, x_{d}\}$ be a finite poset.
For each subset $W \subset P$, we associate $\rho(W) = \sum_{i \in W}\eb_{i} \in \RR^{d}$,
where $\eb_{1}, \ldots, \eb_{d}$ are the unit coordinate vectors of $\RR^{d}$.
In particular $\rho(\emptyset)$ is the origin of $\RR^{d}$.
A {\em poset ideal} of $P$ is a subset $I$ of $P$ such that,
for all $x_{i}$ and $x_{j}$ with
$x_{i} \in I$ and $x_{j} \leq x_{i}$, one has $x_{j} \in I$.
An {\em antichain} of $P$ is a subset
$A$ of $P$ such that $x_{i}$ and $x_{j}$ belonging to $A$ with $i \neq j$ are incomparable.

Recall that the {\em order polytope} is the convex polytope $\Oc(P) \subset \RR^{d}$
which consists of those $(a_{1}, \ldots, a_{d}) \in \RR^{d}$ such that
$0 \leq a_{i} \leq 1$ for every $1 \leq i \leq d$ together with
$a_{i} \geq a_{j}$
if $x_{i} \leq x_{j}$ in $P$.
The vertices of $\Oc(P)$ is
those $\rho(I)$ such that $I$ is a poset ideal of $P$
(\cite[Corollary 1.3]{Stanley}).
The {\em chain polytope} is the convex polytope $\Cc(P) \subset \RR^{d}$
which consists of those $(a_{1}, \ldots, a_{d}) \in \RR^{d}$ such that
$a_{i} \geq 0$ for every $1 \leq i \leq d$ together with
\[
a_{i_{1}} + a_{i_{2}} + \cdots + a_{i_{k}} \leq 1
\]
for every maximal chain $x_{i_{1}} < x_{i_{2}} < \cdots < x_{i_{k}}$ of $P$.
The the vertices of $\Cc(P)$ is
those $\rho(A)$ such that $A$ is an antichain of $P$
(\cite[Theorem 2.2]{Stanley}).

Let $S = K[x_{1}, \ldots, x_{d}, t]$ denote the polynomial ring over a field $K$
whose variables are the elements of $P$ together with the new variable $t$.
For each subset $W \subset P$, we associate the squarefree monomial
$x(W) = \prod_{i \in W} x_{i} \in S$.
In particular $x(\emptyset) = 1$.
The {\em toric ring} $K[\Oc(P)]$ of $\Oc(P)$ is
the subring of $R$ generated by those monomials \ $t \cdot x({I})$ \ such that
$I$ is a poset ideal of $P$.
The toric ring $K[\Cc(P)]$ of $\Cc(P)$ is
the subring of $R$ generated by those monomials \ $t \cdot x({A})$ \ such that
$A$ is an antichain of $P$.

\section{Algebras with straightening laws}
Let $R = \bigoplus_{n \geq 0} R_{n}$ be a graded algebra over a field $R_{0} = K$.
Suppose that $P$ is a poset with an injection $\varphi : P \to R$
such that $\varphi(\alpha)$ is a homogeneous element of $R$ with $\deg \varphi(\alpha) \geq 1$
for every $\alpha \in P$.  A {\em standard monomial} of $R$ is a finite product of the form
$\varphi(\alpha_{1})\varphi(\alpha_{2}) \cdots$
with $\alpha_{1} \leq \alpha_{2} \leq \cdots$.
Then we say that $R = \bigoplus_{n \geq 0} R_{n}$
is an {\em algebra with straightening laws} on $P$ over $K$
if the following conditions are satisfied:
\begin{itemize}
\item
The set of standard monomials is a basis of $R$ as a vector space over $K$;
\item
If $\alpha$ and $\beta$ in $P$ are incomparable and if
\begin{eqnarray}
\label{relation}
\varphi(\alpha)\varphi(\beta) = \sum_{i} r_{i}
\varphi(\gamma_{i_{1}})\varphi(\gamma_{i_{2}}) \cdots, 
\end{eqnarray}
where $0 \neq r_{i} \in K$ and
$\gamma_{i_{1}} \leq \gamma_{i_{2}} \leq \cdots$, 
is the unique expression for $\varphi(\alpha)\varphi(\beta) \in R$
as a linear combination of distinct standard monomials, then
$\gamma_{i_{1}} \leq \alpha$ and $\gamma_{i_{1}} \leq \beta$ for every $i$.
\end{itemize}

We refers the reader to \cite[Chapter XIII]{HibiRedBook} for fundamental materials on
algebras with straightening laws.
The relations (\ref{relation}) are called the {\em straightening relations} of $R$.

Let $P$ be an arbitrary finite poset and $\Jc(P)$ the finite distributive lattice
(\cite[p. 252]{StanleyEC})
consisting of all poset ideals of $P$, ordered by inclusion.
The toric ring $K[\Oc(P)]$ of the order polytope $\Oc(P)$ can be a graded ring
with $\deg (t \cdot x(I)) = 1$ for every $I \in \Jc(P)$.
We then define the injection $\varphi : \Jc(P) \to K[\Oc(P)]$ by setting
$\varphi(I) = t \cdot x(I)$ for every $I \in \Jc(P)$.
One of the fundamental results obtained in \cite{Hibi} is that $K[\Oc(P)]$
is an algebra with straightening laws on $\Jc(P)$.  Its
straightening relations are
\begin{eqnarray}
\label{order}
\varphi(I)\varphi(J) = \varphi(I \cap J)\varphi(I \cup J),
\end{eqnarray}
where $I$ and $J$ are poset ideals of $P$ which are incomparable in $\Jc(P)$.

\begin{Theorem}
\label{ASLchain}
The toric ring of the chain polytope of a finite poset is
an algebra with straightening laws on a finite distributive lattice.
\end{Theorem}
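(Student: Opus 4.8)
The plan is to realize $K[\Cc(P)]$ as an algebra with straightening laws on the \emph{same} finite distributive lattice $\Jc(P)$ that governs $K[\Oc(P)]$, but with different (non-Hibi) straightening relations. The first step is the classical bijection between antichains and poset ideals: an antichain $A$ generates the poset ideal $\langle A\rangle=\{x\in P: x\le a \text{ for some } a\in A\}$, and conversely a poset ideal $I$ has as its set of maximal elements an antichain $\max I$, with $\langle \max I\rangle=I$ and $\max\langle A\rangle=A$. Using this I would define the injection $\psi:\Jc(P)\to K[\Cc(P)]$ by $\psi(I)=t\cdot x(\max I)$. Grading by the $t$-degree makes each $\psi(I)$ homogeneous of degree $1$, and the image of $\psi$ is exactly the generating set $\{t\cdot x(A):A \text{ an antichain}\}$ of $K[\Cc(P)]$. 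It then remains to produce straightening relations of the required form and to show that the standard monomials form a $K$-basis.

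The heart of the argument is a straightening relation for an incomparable pair $I,J\in\Jc(P)$; write $A=\max I$, $B=\max J$. I would set $D=I\cup J$ and observe that, since a maximal element of $I\cup J$ is maximal in whichever of $I,J$ contains it, one has $\max D=\max(A\cup B)\subseteq A\cup B$, so that $x(\max D)$ divides $x(A)x(B)$. Let $E$ be the support of the quotient $x(A)x(B)/x(\max D)$ and put $C=\langle E\rangle$. The key combinatorial observation is that every element common to $A$ and $B$ is automatically maximal in $A\cup B$ — otherwise it would lie below another element of $A$ or of $B$, contradicting that $A,B$ are antichains. This both forces the quotient to be squarefree and, by a short case analysis on $A\cap B$ and $A\triangle B$, shows that $E$ is an antichain with $\max C=E$; hence $\psi(I)\psi(J)=\psi(C)\psi(D)$. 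A further check — using that an element of $A$ lying below an element of $A\cup B$ must in fact lie below an element of $B$, and conversely — gives $E\subseteq I\cap J$, so $C\le I$ and $C\le J$; as also $C\subseteq D$, the pair $C\le D$ is a chain. Thus $\psi(I)\psi(J)=\psi(C)\psi(D)$ is a \emph{binomial} straightening relation, and because the single standard term has bottom element $\gamma_{i_1}=C\le I,J$, it satisfies condition (ii) of the definition. I would stress that in general $C\neq I\cap J$, so these relations genuinely differ from the order-polytope relations~(\ref{order}).

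It then remains to prove that the standard monomials $\psi(I_1)\cdots\psi(I_n)$ with $I_1\subseteq\cdots\subseteq I_n$ form a $K$-basis. For spanning I would fix a term order on the polynomial ring in the variables $\{y_I\}$ for which each relation above has leading term $y_Iy_J$; rewriting any nonstandard monomial then terminates because the term order is a well-order, and the normal forms are exactly the standard monomials. For linear independence I would use that $K[\Cc(P)]$ is a monomial subalgebra of the domain $S$: distinct standard monomials are automatically independent once they map to distinct monomials of $S$, i.e.\ once a multichain $I_1\subseteq\cdots\subseteq I_n$ can be recovered from the monomial $t^n\prod_k x(\max I_k)$. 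I expect this reconstruction to be the main obstacle. The idea is that the exponent of a variable $x$ equals the number of indices $k$ for which $x$ is maximal in $I_k$; since the $I_k$ are nested, this is a contiguous block of indices, fixed by when $x$ enters the chain and when some element above $x$ first enters. Peeling off the top ideal $I_n$ from this exponent data and inducting on $n$ should recover the entire multichain.

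Finally, the asserted consequence for triangulations is immediate once the ASL structure is established. The straightening relations are quadratic binomials whose leading terms $y_Iy_J$ are precisely the squarefree quadratic monomials indexed by incomparable pairs of $\Jc(P)$. Hence the corresponding initial ideal of the toric ideal of $K[\Cc(P)]$ is generated by squarefree quadrics, so it yields a regular unimodular triangulation of $\Cc(P)$ whose minimal nonfaces are the incomparable pairs; this is the order complex of the distributive lattice $\Jc(P)$, which is a flag complex.
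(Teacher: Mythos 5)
Your proposal is correct, and its first half coincides with the paper's: the same lattice $\Jc(P)$, the same injection $\psi(I)=t\cdot x(\max I)$, and the same binomial straightening relation --- your ideal $C=\langle E\rangle$ is exactly the paper's $I*J$ (the ideal generated by $\max(I\cap J)\cap(\max I\cup\max J)$), since both satisfy $\psi(I)\psi(J)=\psi(\,\cdot\,)\psi(I\cup J)$ in the domain $S$ and $\psi$ is injective; your derivation via dividing $x(A)x(B)$ by $x(\max(A\cup B))$ and checking that the quotient is a squarefree antichain contained in $I\cap J$ is a correct (and more explicit) route to the relation the paper simply asserts. Where you genuinely diverge is the proof that the standard monomials form a $K$-basis. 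The paper gets this by a counting argument: by \cite[Theorem 4.1]{Stanley} the Ehrhart polynomials of $\Oc(P)$ and $\Cc(P)$ agree, and since both polytopes have the integer decomposition property their toric rings are the Ehrhart rings, so $K[\Cc(P)]$ and $K[\Oc(P)]$ have the same Hilbert function; combined with spanning, the known ASL structure of $K[\Oc(P)]$ forces the count of standard monomials to match the dimension in each degree. You instead prove both spanning and linear independence directly: independence by reconstructing the multichain $I_1\subseteq\cdots\subseteq I_n$ from the monomial $t^n\prod_k x(\max I_k)$ (the top ideal is the poset ideal generated by the support; peel and induct --- this works and, as a byproduct, reproves the Hilbert-function equality), and spanning by rewriting. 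Your approach is self-contained and avoids the transfer-map and IDP inputs, at the cost of more combinatorial bookkeeping. One small point to tighten: the existence of a global term order on the $y_I$ making every $y_Iy_J$ the leading term is not immediate; for termination it is cleaner to note that each rewrite replaces $\{I,J\}$ by $\{C,I\cup J\}$ with $\#(I\cup J)>\max(\#I,\#J)$, so the decreasingly sorted sequence of cardinalities strictly increases lexicographically and is bounded, which suffices.
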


\begin{proof}
Let $P$ be an arbitrary finite poset and $\Cc(P)$ its chain polytope.
The toric ring $K[\Cc(P)]$ can be a graded ring
with $\deg (t \cdot x(A)) = 1$ for every antichain $A$ of $P$.
For a subset $Z \subset P$, we write $\max(Z)$ for the set of
maximal elements of $Z$.  In particular $\max(Z)$ is an antichain of $P$.
The poset ideal of $P$ generated by
a subset $Y \subset P$ is the smallest poset ideal of $P$ which contains $Y$.

Now, we define the injection $\psi : \Jc(P) \to K[\Cc(P)]$ by setting
$\psi(I) = t \cdot x(\max(I))$ for all poset ideal $I$ of $P$.
If $I$ and $J$ are poset ideals of $P$, then
\begin{eqnarray}
\label{chain}
\psi(I)\psi(J) = \psi(I \cup J) \psi(I*J),
\end{eqnarray}
where $I*J$ is the poset ideal of $P$ generated by
$\max(I \cap J) \cap (\max(I) \cup \max(J))$.  Since
$I*J \subset I$ and $I*J \subset J$, the relations (\ref{chain}) satisfy
the condition of the straightening relations.

It remains to prove that the set of standard monomials of
$K[\Cc(P)]$ is a $K$-basis of $K[\Cc(P)]$.
It follows from \cite[Theorem 4.1]{Stanley} that the Hilbert function
(\cite[p. 33]{HibiRedBook})
of the Ehrhart ring (\cite[p. 97]{HibiRedBook}) of $\Oc(P)$
coincides with that of $\Cc(P)$.
Since $\Oc(P)$ and $\Cc(P)$ possess the integer decomposition
property (\cite[Lemma 2.1]{OhHicompressed}), the Ehrhart ring of $\Oc(P)$
coincides with $K[\Oc(P)]$ and the Ehrhart ring of $\Cc(P)$
coincides with $K[\Cc(P)]$.  Hence the Hilbert function of
$K[\Oc(P)]$ is equal to that of $K[\Cc(P)]$.
Thus the set of standard monomials of
$K[\Cc(P)]$ is the $K$-basis of $K[\Cc(P)]$, as desired.
\, \, \, \, \, \, \, \, \, \,
\, \, \, \, \, \, \, \,
\end{proof}

\section{Flag and unimodular triangulations}
The fact that $K[\Cc(P)]$ is an algebra with straightening laws guarantees that
the toric ideal of $\Cc(P)$ possesses an initial ideal
generated by squarefree quadratic monomials.
We refer the reader to \cite{OhHiquadratic} and \cite[Appendix]{OhHirootsystem} for the background
of the existence of squarefree quadratic initial ideals of toric ideals.
By virtue of \cite[Theorem 8.3]{Sturmfels}, it follows that

\begin{Corollary}
\label{ASLchain}
Every chain polytope possesses a regular unimodular triangulation arising from a flag complex.
\end{Corollary}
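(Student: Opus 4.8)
The plan is to convert the algebra-with-straightening-laws structure established just above---namely that $K[\Cc(P)]$ is an algebra with straightening laws on the distributive lattice $\Jc(P)$---into a statement about a squarefree quadratic initial ideal of the toric ideal of $\Cc(P)$, and then to feed this into the dictionary between initial ideals and regular triangulations. Concretely, present $K[\Cc(P)]$ as a quotient $K[\{X_I : I \in \Jc(P)\}]/\Ic$, where the variable $X_I$ is sent to $\psi(I) = t \cdot x(\max(I))$ and $\Ic$ is the toric (defining) ideal of $\Cc(P)$. First I would fix a term order $\prec$ on $K[\{X_I\}]$ that refines the partial order of $\Jc(P)$ and selects, for each incomparable pair $I, J$, the monomial $X_I X_J$ as the leading term of the straightening relation~(\ref{chain}). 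The content of the axiom that the standard monomials form a $K$-basis is then precisely that these straightening relations constitute a Gr\"obner basis of $\Ic$ with respect to $\prec$, so that
\[
\ini_{\prec}(\Ic) = (X_I X_J : I, J \in \Jc(P) \text{ incomparable}).
\]

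This initial ideal is generated by monomials that are quadratic and, because $I \neq J$ for an incomparable pair, squarefree. The second step is to invoke \cite[Theorem 8.3]{Sturmfels}: the regular triangulation $\Delta_{\prec}$ of $\Cc(P)$ induced by $\prec$ has Stanley--Reisner ideal equal to the radical of $\ini_{\prec}(\Ic)$, and $\Delta_{\prec}$ is unimodular exactly when $\ini_{\prec}(\Ic)$ is squarefree. Since our initial ideal is already squarefree, it coincides with its radical, so $\Delta_{\prec}$ is a regular unimodular triangulation whose Stanley--Reisner ideal is $\ini_{\prec}(\Ic)$. Finally, because this ideal is generated in degree two, the minimal nonfaces of $\Delta_{\prec}$ all consist of two vertices; that is, $\Delta_{\prec}$ is the clique complex of its $1$-skeleton, hence a flag complex. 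Assembling these observations yields the desired regular unimodular triangulation arising from a flag complex.

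The delicate point is the first step: one must be sure that the leading terms of the straightening relations generate the \emph{entire} initial ideal, with no further Gr\"obner basis elements contributing non-quadratic or non-squarefree generators. This is exactly where the preceding theorem is used in full strength, since the linear independence and spanning of the standard monomials is equivalent to $\ini_{\prec}(\Ic)$ being the squarefree quadratic monomial ideal displayed above; the passage to triangulations afterwards is purely the formalism of \cite{Sturmfels}, and the flag property is a direct reading of the degrees of the generators.
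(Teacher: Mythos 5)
Your proposal is correct and follows essentially the same route as the paper: the ASL structure yields a squarefree quadratic initial ideal of the toric ideal (the leading terms of the straightening relations being the products $X_I X_J$ over incomparable pairs), and then \cite[Theorem 8.3]{Sturmfels} converts this into a regular unimodular triangulation, with flagness read off from the quadratic generation. The paper states this chain of implications more tersely, citing \cite{OhHiquadratic} and \cite[Appendix]{OhHirootsystem} for the Gr\"obner-basis background that you spell out explicitly.
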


\section{Further questions}
Let, as before, $P$ be a finite poset and $\Jc(P)$ the finite distributive lattice
consisting of all poset ideals of $P$, ordered by inclusion.
Let $S = K[x_{1}, \ldots, x_{n},t]$ denote the polynomial ring and
$\Omega = \{ w_{I} \}_{I \in \Jc(P)}$ a set of monomials in $x_{1}, \ldots, x_{n}$
indexed by $\Jc(P)$.  We write $K[\Omega]$ for the subring of $S$ generated by those monomials
$w_{I} \cdot t$ with $I \in \Jc(P)$ and define the injection $\varphi : \Jc(P) \to K[\Omega]$
by setting $\varphi(I) = w_{I} \cdot t$ for every $I \in \Jc(P)$.

Suppose that $K[\Omega]$ is an algebra with straightening laws on $\Jc(P)$ over $K$.
We say that $K[\Omega]$ is {\em compatible} if each of its straightening relations
is of the form
$\varphi(I)\varphi(I') = \varphi(J)\varphi(J')$
such that $J \leq I \wedge I'$ and $J' \geq I \vee I'$,
where $I$ and $I'$ are poset ideals of $P$ which are incomparable in $\Jc(P)$.

Let $K[\Omega]$ and $K[\Omega']$ be compatible algebras with straightening laws
on $\Jc(P)$ over $K$.  Then we identify $K[\Omega]$ with $K[\Omega']$ if
the straightening relations of $K[\Omega]$ coincide with those of $K[\Omega']$.

Let $P^{*}$ be the {\em dual poset} (\cite[p. 247]{StanleyEC}) of a poset $P$.
The toric ring $K[\Cc(P^{*})]$ of $\Cc(P^{*})$ can be regarded as
an algebra with straightening laws on $\Jc(P)$ over $K$ in the obvious way.
Clearly each of the toric rings $K[\Oc(P)]$, $K[\Cc(P^)]$ and $K[\Cc(P^{*})]$ is
a compatible algebra with straightening laws on $\Jc(P)$ over $K$

\begin{Question}
{\em (a)}
Given a finite poset $P$, find all possible compatible algebras
with straightening laws on $\Jc(P)$ over $K$.

{\em (b)}
In particular, for which posets $P$, does there exist a unique compatible
an algebra with straightening laws on $\Jc(P)$ over $K$\,?
\end{Question}

\begin{Example}
{\em
(a)
Let $P$ be a poset of Figure $1$.  Then $K[\Oc(P)] = K[\Cc(P^)]$ and
there exists a unique a compatible algebra with straightening laws
on $\Jc(P)$ over $K$.

(b)
Let $P$ be a poset of Figure $2$.
Then there exists three compatible algebras with straightening laws on $\Jc(P)$ over $K$.
They are $K[\Oc(P)]$, $K[\Cc(P^)]$ and $K[\Cc(P^{*})]$.

(c)
Let $P$ be a poset of Figure $3$.
Then there exists nine compatible algebras with straightening laws on $\Jc(P)$ over $K$.

\newpage

$$
\begin{array}{ccccc}
  
\xy 0;/r.2pc/: 
 (8,5)*{\circ}="b";
 (-3,-10)*{\circ}="d";
 (8,-10)*{\circ}="e";
  "e"; "b"**\dir{-};
    
\endxy & \,\,\,\,\,\,\,\,\,\,\,\,\,\,\,\,&
\xy 0;/r.2pc/: (-3,5)*{\circ}="a";
 (8,5)*{\circ}="b";
 (-3,-10)*{\circ}="d";
 (8,-10)*{\circ}="e";
  "e"; "b"**\dir{-};
    "a"; "d"**\dir{-};
     "a"; "e"**\dir{-};
\endxy
 & \,\,\,\,\,\,\,\,\,\,\,\,\,\,\,\,&
\xy 0;/r.2pc/: (0,0)*{\circ}="a";
 (8,10)*{\circ}="b";
 (-8,10)*{\circ}="c";
 (-8,-10)*{\circ}="d";
 (8,-10)*{\circ}="e";
  "a"; "b"**\dir{-};
   "a"; "c"**\dir{-};
    "a"; "d"**\dir{-};
     "a"; "e"**\dir{-};
\endxy
 \\
 \,\,\,\,\,\,\,\,\,\,\,\,\,\,\,\,&\,\,\,\,\,\,\,\,\,\,\,\,\,\,\,\,&\,\,\,\,\,\,\,\,\,\,\,\,\,\,\,\,
 &\,\,\,\,\,\,\,\,\,\,\,\,\,\,\,\,&\,\,\,\,\,\,\,\,\,\,\,\,\,\,\,\,
 \\
  \text{ Figure }1 &\,\,\,\,\,\,\,\,\,\,\,\,\,\,\,\,& \text{ Figure } 2 &\,\,\,\,\,\,\,\,\,\,\,\,\,\,\,\,& \text{ Figure } 3
\end{array}$$

%
}
\end{Example}

\smallskip

\begin{Conjecture}
{\em
If $P$ is a disjoint union of chains, then
the compatible algebras with straightening laws on $\Jc(P)$ over $K$
are $K[\Oc(P)]$, $K[\Cc(P^)]$ and $K[\Cc(P^{*})]$.
}
\end{Conjecture}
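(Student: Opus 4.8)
The plan is to first reduce the conjecture to a uniqueness statement. When $P$ is a disjoint union of chains, $\Jc(P)$ is a direct product of chains, and for poset ideals $I,J$ one has $\max(I\cap J)\subset \max(I)\cup\max(J)$: a maximal element of $I\cap J$ is the top of some chain at the smaller of its two heights, hence maximal in whichever of $I,J$ attains that minimum. Consequently $I*J=I\cap J=I\wedge J$, so the straightening relations of $K[\Cc(P)]$ coincide with those of $K[\Oc(P)]$, and the dual computation gives the same for $K[\Cc(P^{*})]$. Thus all three algebras are identified, and the conjecture becomes: on the grid $L=\Jc(P)=\prod_i \mathbf{c}_{n_i}$ (a product of chains) there is a \emph{unique} compatible algebra with straightening laws, the one whose straightening relation for every incomparable pair $\{I,I'\}$ is $\varphi(I)\varphi(I')=\varphi(I\wedge I')\varphi(I\vee I')$.

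To prove uniqueness I would encode a compatible ASL $K[\Omega]$ by the exponent vectors $v_I\in\ZZ^n$ of its monomials $w_I$, which are pairwise distinct since $\varphi$ is injective, together with, for each incomparable pair, its relation $v_I+v_{I'}=v_J+v_{J'}$ with $J\le I\wedge I'$ and $J'\ge I\vee I'$. The decisive reduction is that it suffices to show every straightening relation preserves each chain-coordinate, i.e.\ $m_\ell(J)+m_\ell(J')=m_\ell(I)+m_\ell(I')$ for every chain $\ell$, where $m_\ell(\cdot)$ is the level in the $\ell$-th chain. Indeed, on a product of chains $m_\ell(I)+m_\ell(I')=m_\ell(I\wedge I')+m_\ell(I\vee I')$ because $\min+\max=\text{sum}$; combined with $m_\ell(J)\le m_\ell(I\wedge I')$ and $m_\ell(J')\ge m_\ell(I\vee I')$, coordinatewise preservation forces $m_\ell(J)=m_\ell(I\wedge I')$ and $m_\ell(J')=m_\ell(I\vee I')$ for all $\ell$, whence $J=I\wedge I'$ and $J'=I\vee I'$.

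The heart of the argument, and the step I expect to be the main obstacle, is therefore to show that the straightening relations preserve each chain-coordinate (equivalently, that the per-chain level functions extend to gradings of $K[\Omega]$). My approach would be a no-collision induction anchored at the extreme elements of $L$. Since one cannot straighten below $\hat 0$ or above $\hat 1$, every complementary pair with $I\wedge I'=\hat 0$ and $I\vee I'=\hat 1$ has target forced to $(\hat 0,\hat 1)$; more generally the meet is rigid whenever it equals $\hat 0$ and the join is rigid whenever it equals $\hat 1$. I would propagate this rigidity inward using confluence of the straightening process (a consequence of the standard-monomial basis property): straightening a product of three generators in two orders must give the same normal form, producing consistency relations among the targets of overlapping incomparable pairs. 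The point to extract is that if some pair strictly pushed a coordinate, then comparing its relation with a neighbouring relation whose corresponding corner is already pinned would yield $v_{I_1}=v_{I_2}$ for distinct $I_1,I_2$, contradicting injectivity of $\varphi$ — exactly the mechanism that forces uniqueness on the $3\times 2$ grid. Finally I would record that these coordinatewise-preserving elementary-square relations generate the full relation lattice of the order polytope on a product of chains, so pinning them down pins down the entire ASL.

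As an independent check and a possible alternative route, I would exploit that for a disjoint union of chains $\Oc(P)=\prod_i\Oc(C_i)$ is a product of simplices, so $K[\Oc(P)]$ is a Segre product of polynomial rings; the rigidity of such Segre products should give a second proof that no compatible deformation of the straightening relations survives. The main technical risk in the primary approach is controlling the inward propagation of rigidity, since the fully anchored complementary pairs are sparse in a large grid; making the confluence bookkeeping uniform across \emph{all} incomparable pairs, rather than only the elementary squares, is where the real work lies.
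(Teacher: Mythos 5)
The statement you are proving is stated in the paper as an open \emph{Conjecture}; the paper contains no proof of it, so your proposal cannot be measured against an argument of the authors and must stand on its own. Its first step does stand: for a disjoint union of chains, a maximal element of $I \cap J$ is the top of some chain at level $\min(m_\ell(I),m_\ell(J))$ and hence lies in $\max(I)\cup\max(J)$, so $I*J=I\cap J$ and the straightening relations of $K[\Cc(P)]$, $K[\Cc(P^{*})]$ and $K[\Oc(P)]$ all coincide under the paper's identification convention. This correctly (and consistently with the paper's Example (a)) reduces the conjecture to the assertion that the product of chains $\Jc(P)$ carries a \emph{unique} compatible algebra with straightening laws, namely the one with targets $(I\wedge I', I\vee I')$.

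The gap is that the uniqueness assertion is never actually proved. Your ``decisive reduction'' to coordinate preservation is not a reduction at all: since compatibility already gives $m_\ell(J)\le m_\ell(I\wedge I')$ and $m_\ell(J')\ge m_\ell(I\vee I')$, preservation of every $m_\ell$ is immediately equivalent to $J=I\wedge I'$ and $J'=I\vee I'$, i.e.\ it is a restatement of the conclusion, and the functions $m_\ell$ have no a priori relation to the arbitrary monomials $w_I$. All of the content therefore sits in the confluence/no-collision induction, which you only gesture at. That mechanism does work on the $3\times 2$ grid (straightening $\varphi(1,0)\varphi(0,1)\varphi(2,0)$ in two orders forces the target of $\{(1,0),(0,1)\}$ to be $((0,0),(1,1))$, because the two candidate normal forms are distinct standard monomials), but for a general product of chains you must, for every incomparable pair $\{I,I'\}$ with $I\wedge I'\neq\hat 0$ and $I\vee I'\neq\hat 1$, exhibit a third generator $\varphi(M)$ and an induction order in which the relations used to re-straighten $\varphi(I)\varphi(I')\varphi(M)$ are already pinned, and then verify that every non-canonical target produces two \emph{distinct} standard monomials (rather than accidentally the same one) so that linear independence yields a contradiction. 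No choice of $M$, no induction parameter, and no such verification is given; you acknowledge this is ``where the real work lies,'' and the Segre-product ``rigidity'' alternative is likewise only named, not argued. As it stands the proposal is a correct reduction plus a plausible but unexecuted strategy, not a proof of the conjecture.
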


\bigskip

{}

\end{document}